\title{A note on the geometry of the MAP partition in Conjugate Exponential Bayesian Mixture Models}
\author{Łukasz Rajkowski\footnote{Faculty of Mathematics, Informatics and
Mechanics, University of Warsaw}\\ 
John Noble\footnote{Faculty of Mathematics, Informatics and Mechanics, University of Warsaw}}
\newcolumntype{S}{>{\centering\arraybackslash}m{4cm}}
\renewcommand{\bT}{\bm{T}}
\newcommand{\bta}{\bm{\eta}}
\newcommand{\bma}{\bm{a}}
\newcommand{\bt}{\bm{t}}
\newcommand{\bft}{\mathbf{t}}
\newcommand{\btheta}{\bm{\theta}}
\newcommand{\bftheta}{\bm{\uptheta}}
\newcommand{\bphi}{\bm{\phi}}
\newcommand{\bfphi}{\bm{\upphi}}
\newcommand{\btau}{\bm{\tau}}
\newcommand{\bchi}{\bm{\chi}}
\newcommand{\bmu}{\bm{\mu}}
\newcommand{\bLambda}{\bm{\Lambda}}
\newcommand{\bSigma}{\bm{\Sigma}}
\newcommand{\bPsi}{\bm{\Psi}}
\renewcommand{\diag}{\textrm{diag}}
\newcommand{\low}{\textrm{low}}
\newcommand{\bmB}{\bm{B}}
\newcommand{\Wishart}{\cW}
\newcommand{\bx}{\bm{x}}
\newcommand{\bfx}{\mathbf{x}}
\newcommand{\by}{\bm{y}}
\newcommand{\ERP}{\textrm{ERP}}
\newcommand{\LL}{\cU}
\newcommand{\ov}[1]{\overline{#1}}
\newcommand{\un}[1]{\underline{#1}}
\newcommand{\bz}{\bm{z}}
\newcommand{\mnt}[1]{{ \hat{#1} }}
\newcommand{\Ik}{{\color{black}k}}
\newcommand{\convset}{V}
\newcommand{\uvar}{\bm{u}}
\newcommand{\buvar}{\mathbf{u}}
\newcommand{\trs}{\hspace*{-.5mm}^\top}
\newcommand*{\shifttext}[2]{%
  \settowidth{\@tempdima}{#2}%
  \makebox[\@tempdima]{\hspace*{#1}#2}%
}
\numberwithin{equation}{section}
\theoremstyle{plain}
\newtheorem{thm}{Theorem}
\newtheorem*{thm*}{Theorem}
\numberwithin{thm}{section}
\newtheorem{lem}[thm]{Lemma}
\newtheorem{cor}[thm]{Corollary}
\newtheorem{rem}[thm]{Remark}
\theoremstyle{definition}
\newtheorem{dfn}[thm]{Definition}
\newtheorem*{dfn*}{Definition}
\newtheorem*{not*}{Notation}
\newtheorem*{rem*}{Remark}
\newtheorem{exmp}[thm]{Example}
\begin{document}
\maketitle
\abstract{We investigate the geometry of the maximal a posteriori (MAP) partition in
the Bayesian Mixture Model where the component and the base distributions are
chosen from conjugate exponential families. We prove that in this case the
clusters are separated by the contour lines of a linear functional of the
sufficient statistic. As a particular example, we describe Bayesian Mixture of
Normals with Normal-inverse-Wishart prior on the component mean and covariance,
in which the clusters in any MAP partition are 
separated by a quadratic surface. In connection with results of
\cite{bib:Rajkowski2018}, where the linear separability of clusters in the
Bayesian Mixture Model with a fixed component covariance matrix was proved, it gives a nice Bayesian analogue of the
geometric properties of Fisher Discriminant Analysis (LDA and QDA). 
}

\smallskip
\noindent \textbf{Keywords:} Bayesian Mixture Models, Maximal a Posteriori Partition

\setlength\parindent{0pt}
\section{Introduction}\label{sec:intro}
\noindent In the standard setting of Bayesian Mixture Models we assume that the
target distribution is a random mixture of distributions from some parametrized
family. We
assume that the probabilities of components are sampled from a (perhaps
infinitely dimensional) simplex and the parameters of the component distribution are
sampled independently for each component.
A prominent example is the Dirichlet Process Mixture Model
(\cite{bib:Antoniak1974mixtures}), where the prior distribution on the
probability weights is Sethuraman's stick breaking process (\cite{bib:Sethuraman1994}).

\smallskip
A popular choice of the component distribution is multivariate Normal, which
gives \textit{Normal Bayesian Mixture Model}. There are two
standard conjugate prior distributions on the component mean and
covariance matrix (\cite{bib:Gelman2013bayesian}, Chapter 3.6): Normal distribution on the mean with fixed component
covariance matrix or Normal-inverse-Wishart distribution (here the terminology
is adopted from \cite{bib:Murphy2007conjugate}), where the component
covariance matrix follows the inverse-Wishart distribution and the component
mean (conditioned on the component covariance matrix) is Normal.
The exact specification of these priors is given in \Cref{sec:three_models}.


\smallskip
Bayesian Mixture Models give a basis for cluster analysis. Indeed, one can
translate the distribution on component probabilities into a discrete prior
distribution on the possible data partitions -- just like Sethuraman's
construction translates in to the Generalised P\'{o}lya Urn Scheme
(\cite{bib:Blackwell1973ferguson}), also known as the Chinese Restaurant Process
(\cite{bib:Aldous1985exchangeability}).
The inference about clusters is based on the posterior distribution on the space
of partitions. Analysing partition which maximises the posterior probability
(\textit{the MAP partition}) seems to be a natural choice.

\smallskip
In \cite{bib:Rajkowski2018} it is proved that in the Normal Bayesian Mixture
Model, when the component covariance matrix is fixed and the prior on the component
mean is Normal, the MAP partition is \textit{convex}, i.e. the convex hulls of
clusters are disjoint. An equivalent formulation is: for every two clusters in the MAP there
exists a hyperplane that separates them.

\smallskip
Placing an inverse-Wishart prior on the cluster covariance structure, with covariances
for different clusters, independent of each other, gives better modelling
possibilities, since it is unusual for the covariance to be known a priori and
the same for different clusters.
It would be of interest to characterise the boundaries of the MAP partition in
this case. Since cluster covariance structures are no longer fixed, we might expect quadratic
boundaries, analogously to the Fisher's Quadratic Discriminant Analysis
(\cite{bib:Hastie2001elements}). 

\smallskip
A natural generalisation of this hypothesis is to establish the separability result
in the case when the base and the component distributions in the Bayesian
Mixture Model form a conjugate exponential family. One may expect that the
separability can be described in terms of the sufficient statistics.
This is indeed what happens and it is the goal of the article to prove this.

\section{Formal statement of the result}\label{sec:result}

\textbf{Naming conventions and notation.}
In order to facilitate the readership, we introduce the following naming
convention. To make a distinction between real numbers and vectors or matrices, we denote
the latter in bold, i.e. $\bx, \btheta, \bm{\bSigma}$. We do the same to
distinguish between real-valued, and vector-valued functions. We use an upright
bold font do denote a sequence of
vectors, e.g. $\mathbf{x}=(\bx_1,\ldots,\bx_n)$. In such case, when
$I$ is a subset of $[n]:=\{1,\ldots,n\}$, we define
$\bfx_I:=(\bx_i)_{i\in I}$.

\subsection{Bayesian Mixture Models and the MAP partition}
Let $\Theta\subset\R^p$ be the parameter space and $\{ g_{ \btheta }\colon
\btheta\in\Theta \}$ be
a family of probability densities on the observation space $\R^d$. Consider a
prior distribution on $\Theta$ given by its density $\pi$. 
Let $\cP$ be a probability
distribution on the $m$-dimensional simplex
$\Delta^m=\{\bm{p}=(p_i)_{i=1}^m\colon \textrm{$\sum_{i=1}^m p_i=1$ and
$p_i\geq 0$ for $i\leq m$}\}$ (where $m\in\N\cup\{\infty\}$). Let 
\begin{equation}\label{eq:bmm1}
\begin{array}{rcl}
\bm{p}=(p_i)_{i=1}^m&\sim& \cP \\
\bftheta=(\btheta_i)_{i=1}^m&\iid& \pi \\
\bfx=(\bx_1,\ldots,\bx_n) \cond \bm{p},\bftheta&\iid& \sum_{i=1}^m p_i
g_{\btheta_i}.
\end{array}
\end{equation}
This is a \textit{Bayesian Mixture Model}.
It can model possible clusters within data; they are defined by deciding which
$g_{\btheta_i}$ generated a given data point. In order to formally define the
clusters, we need to rewrite \eqref{eq:bmm1} as
\begin{equation}\label{eq:bmm2}
\begin{array}{rcl}
\bm{p}=(p_i)_{i=1}^m&\sim& \cP \\
\bftheta=(\btheta_i)_{i=1}^m&\iid& \pi \\
\bfphi=(\bphi_1,\ldots,\bphi_n) \cond \bm{p},\bftheta&\iid& \sum_{i=1}^m p_i
\delta_{\btheta_i}\\
\bx_i\cond \bm{p},\bftheta,\bfphi&\sim& g_{\bphi_i} \quad\textrm{ independently
for all $i\leq n$.}
\end{array}
\end{equation}
Then the clusters are the
classes of abstraction of the equivalence relation $i\sim j\equiv \bphi_i=\bphi_j$.
In this way the distribution on $m$ dimensional simplex \textit{generates} a
probability distribution on the partitions of set $[n]$ into at most $m$ subsets. According to
\cite{bib:Pitman2002combinatorial} this leads to an
\textit{exchangeable partition}, i.e. a random partition whose probability
function is invariant with respect to permutations of indices.

\begin{dfn}
We say that $\bm{\Pi}$ is an exchangeable random partition of $[n]$ if for every
partition $\cI$ of $[n]$ and permutation $\sigma\colon [n]\to [n]$,
\begin{equation}\label{eq:expart}
\P(\bm{\Pi}=\cI) = 
\P\Big(\bm{\Pi}=\big\{\{\sigma(i)\colon i\in I\}\colon I\in\cI\big\}\Big).
\end{equation}
In order to indicate that $\bm{\Pi}$ is an exchangeable random partition of $[n]$ we use
a generic notation $\bm{\Pi}\sim \ERP_n$. Moreover we use the notation
$p_n(\cI):=\P(\bm{\Pi}=\cI)$.
\end{dfn}

For $\btheta\sim \pi$, $k\in\N$ and $\buvar=(\uvar_1,\ldots,\uvar_k)\cond \btheta\iid g_{ \btheta }$ let
$f_k$ be the resulting marginal distribution on $\buvar$, i.e. 
\begin{equation}
f_{ k }(\uvar_1,\ldots,\uvar_k):=\int_\Theta \pi(\btheta)\prod_{i=1}^k g_{ \btheta }(\uvar_i)\d{\btheta}.
\end{equation}
Let $\ERP_n$ be the exchangeable probability distribution on the space of
partitions generated by $\cP$. We see that \eqref{eq:bmm1} is equivalent to
\begin{equation}\label{eq:bmm3}
\begin{array}{rcl}
\cI&\sim& \ERP_n\\
\bfx_I:=(\bx_i)_{i\in I}\cond \cI &\sim& f_{ |I| } \quad\textrm{ independently for
all $I\in\cI$.}
\end{array}
\end{equation}

We stress the fact that the independent sampling on the `lower' level of
\eqref{eq:bmm3} relates
to the independence between clusters (conditioned on the random partition); within one cluster the
observations are (marginally) dependent. To make the notation more concise we
define
\begin{equation}\label{eq:pooled_f}
f(\bfx\cond \cI):= \prod_{I\in\cI} f_{|I|}(\bfx_I).
\end{equation}
Then
\eqref{eq:bmm3} becomes
\begin{equation}\label{eq:bmm3a}
\begin{array}{rcl}
\cI&\sim& \ERP_n\\
\bfx\cond \cI &\sim& f(\cdot\cond \cI).
\end{array}
\end{equation}

\begin{exmp}
Consider the Dirichlet Process Mixture Model (\cite{bib:Antoniak1974mixtures}).
Let $\alpha>0$, $G_0$ be a probability measure on $\Theta$ with density
$\pi$ and $DP(\alpha,G_0)$ be the Dirichlet Process on $\Theta$
(\cite{bib:Ferguson1973bayesian}). The Dirichlet
Process Mixture Model is defined by
\begin{equation}\label{eq:dpmm1}
\begin{array}{rcl}
G&\sim &DP(\alpha, G_0)\\
\bfphi=(\bphi_1,\ldots,\bphi_n) \cond G&\iid& G\\
\bx_i\cond G,\bfphi&\sim& g_{\bphi_i} \quad\textrm{ independently
for all $i\leq n$.}
\end{array}
\end{equation}
Let $V_1,V_2,\ldots\iid \Beta(1,\alpha)$, $p_1=V_1$,
$p_k=V_k\prod_{i=1}^{k-1}(1-V_i)$ for $k>1$. By
\cite{bib:Sethuraman1994} by setting $\cP$ to be the distribution of
$\bm{p}=(p_1,p_2,\ldots)$ we get that \eqref{eq:dpmm1} is
equivalent to \eqref{eq:bmm1}. The exchangeable random partition that $\cP$
generates is the Generalized Polya Urn Scheme (\cite{bib:Blackwell1973ferguson})
or the Chinese Restaurant Process (\cite{bib:Aldous1985exchangeability}) with
the probability weight given by
\begin{equation}
p_n(\cI)=\frac{\alpha^{|\cI|}}{\alpha^{(n)}}\prod_{I\in\cI}(|I|-1)!,
\end{equation}
where $\alpha^{(n)}=\alpha(\alpha+1)\ldots
(\alpha+n-1)$. Again, setting $\ERP_n$ to be the Chinese Restaurant Process with
parameter $\alpha$ we get the equivalence between \eqref{eq:dpmm1} and
\eqref{eq:bmm3a}.
\end{exmp}

\begin{dfn}
Let $\bfx=(\bx_1,\ldots,\bx_n)\in(\R^d)^n$. We say that a partition  $\hat{\cI}$ of $[n]$ is a
MAP of $\bfx$ if for any other partition $\cI$ of $[n]$:
\begin{equation}\label{eq:MAPdef}
 p_n(\cI)f(\bfx_I\cond \cI)\leq
 p_n(\hat{\cI}) f(\bfx_I\cond \hat{\cI}).
\end{equation}
\end{dfn}

\begin{not*}
Here and below, $\argmax_{a\in A}\bphi(a)$ is the set
of maximisers of function $\bphi$ on the set $A$ (note that the maximiser may not
be unique). Hence the MAP partition of $\bfx$ in a Bayesian Mixture Model
\eqref{eq:bmm3a} can be defined by
\begin{equation}\label{eq:MAPdef_argmax}
\hat{\cI}\in\argmax_{\textrm{partitions $\cI$ of $[n]$}} p_n(\cI) f(\bfx_I\cond
\cI).
\end{equation}
or, equivalently, using \eqref{eq:pooled_f}
\begin{equation}\label{eq:MAPdef_log_argmax}
\hat{\cI}\in\argmax_{\textrm{partitions $\cI$ of $[n]$}} \Big(\ln p_n(\cI) +
\sum_{I\in\cI} \ln f_{|I|}(\bfx_I)\Big).
\end{equation}
\end{not*}
\subsection{Specification of the Exponential Family Bayesian Mixture
Models}\label{sec:exp_models}
In the paper we consider \textit{Exponential Family} Bayesian Mixture Models in
which the base and the component distributions come from a conjugate exponential
family.

\smallskip
Let $\cX\subseteq \R^d$ be the observation space and let $\Theta\subseteq \R^p$
be the parameter space, which is an open subset of $\R^p$. Let $\bT\colon
\cX\to\R^p$ be a statistic, let $h\colon \cX\to\R$ be a function with the
support of positive Lebesgue measure and let $\bta\colon \Theta\to\R^p$.
Let $B\colon \Theta\to\R$.
Suppose that the family of component densities is given by
\begin{equation}\label{eq:compdens}
g_{ \btheta }(\bx)=h(\bx)\cdot\exp\left\{\bT(\bx)\trs\bta(\btheta)-B(\btheta)\right\}.
\end{equation}
Let $B(\btheta)=\bma\trs\bmB(\btheta)$ where $\bma\in\R^q$ and
$\bmB(\btheta)=[B_1(\btheta),\ldots,B_q(\btheta)]\trs$.
Let $H\colon \Theta\to\R$ and let
\begin{equation}
\Omega:=\left\{(\bchi,\btau)\in \R^p\times\R^q\colon
\int_{ \Theta } H(\btheta)\cdot\exp\left\{[
\bta(\btheta)\trs, \bmB(\btheta)\trs ]
\begin{bmatrix} \bchi\\ \btau \end{bmatrix}\right\}\d{\btheta}<\infty\right\}.
\end{equation}
be a nonempty hyperparameter space. 
We define $A\colon \R^p\times\R^q\to \R$ as 
\begin{equation}
A( \bchi,\btau ):=
\ln\left(\int_{ \Theta } H(\btheta)\cdot\exp\left\{[
\bta(\btheta)\trs, \bmB(\btheta)\trs ]
\begin{bmatrix} \bchi\\ \btau \end{bmatrix}\right\}\d{\btheta}\right)
\end{equation}

\begin{dfn*}
\textit{Canonical Exponential Family Bayesian Mixture Model} is a Bayesian Mixture Model
in which the component density is given by \eqref{eq:compdens} and the base
density is 
\begin{equation}\label{eq:expdef}
\pi(\btheta)=\pi_{\bchi,\btau}(\btheta)=H(\btheta)\cdot\exp\left\{[
\bta(\btheta)\trs, -\bmB(\btheta)\trs ]
\begin{bmatrix} \bchi\\ \btau \end{bmatrix}
-A( \bchi,\btau  )\right\}
\end{equation}
for some $(\bchi,\btau)\in\Omega$.
\end{dfn*}

Let $\btheta\sim\pi_{\bchi,\kappa}$ and $\buvar=(\uvar_1,\ldots,\uvar_k)\cond
\btheta\sim g_{ \btheta }$ then it follows from the multiplication rule that
$(\bchi_\buvar,\btau_k)\in\Omega$ almost surely and $\btheta\cond \buvar\sim \pi_{\bchi_\buvar,\btau_k}$,
where $\bchi_\buvar:=\bchi+\sum_{i=1}^k \bT(\uvar_i)$ and
$\btau_k:=\btau+k\bma$.
As the marginal density of $\buvar$ is the quotient of the joint density
of $(\btheta,\buvar)$ and the conditional density of $\btheta\cond\buvar$, we get that
\begin{equation}\label{eq:expmarg}
\buvar\sim f_k(\buvar):= \prod_{i=1}^k h(\uvar_i)\cdot
\exp\left\{A( \bchi_\buvar, \btau_k )-A( \bchi, \btau )\right\}
\end{equation}

%
%

\subsection{Main result}
We now define what we mean by $\bT$-linear separation of clusters.

\begin{dfn}
Let $\cX$ be a family of subsets of $\R^d$ and $\cL$ a family of real functions on
$\R^d$. We say that \emph{$\cX$ is separated by $\cL$} if for every $X,Y\in\cX$,
$X\neq Y$, there exist 
$L_{X,Y}\in\cL$ such that $L_{X,Y}(\bx)\geq 0$ and $L_{X,Y}(\by)< 0$ for all $\bx\in X, \by\in Y$.
Moreover, if $\cL=\{\bm{a}\trs \bT(\bx) + b\colon \bm{a}\in \R^s, b\in\R\}$ for
some function $\bT\colon \R^d\to\R^s$, we say that \textit{$\cX$ is $\bT$-linearly
separated from $\cY$}. If $\bT(\bx)=\bx$, we use the term \textit{linear
separability} for short.
\end{dfn}

\begin{figure}[H]
\begin{center}
\begin{subfigure}{.25\textwidth}
\centering
\includegraphics[width=\textwidth]{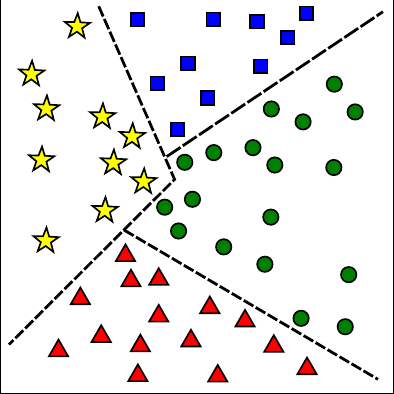}
\caption{This family is linearly separable.}
\end{subfigure}\qquad
\begin{subfigure}{.25\textwidth}
\centering
\includegraphics[width=\textwidth]{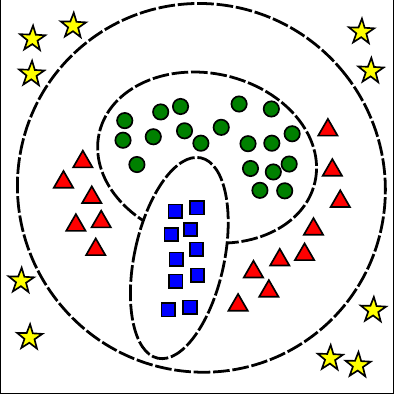}
\caption{This family is quadratically, but not linearly separable.}
\end{subfigure}\qquad
\begin{subfigure}{.25\textwidth}
\centering
\includegraphics[width=\textwidth]{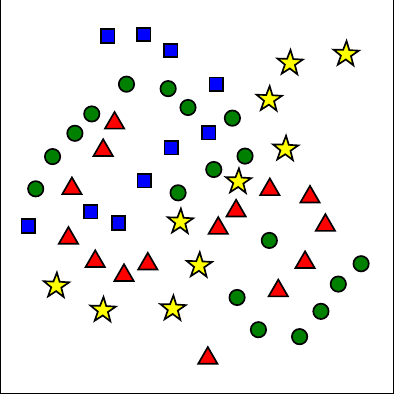}
\caption{This family is not quadratically separable.}
\end{subfigure}
\caption{
Illustration of the different types of separability.
}
\end{center}
\label{fig:convex}
\end{figure}
\begin{not*}
For the notational convenience we will use the separability notions also with
respect to the sets of sequences in $\R^d$. For example, if $\bx_1,\ldots,\bx_n\in
\R^d$ and $I,J$ are disjoint subsets of $[n]$ then the expression
\emph{$\bfx_I$ is linearly separated from $\bfx_J$} means that
$\{\bx_i\colon i\in I\}$ is linearly separated from $\{\bx_j\colon j\in J\}$.
\end{not*}

In \cite{bib:Rajkowski2018} it is proved that in the
Normal Bayesian Mixture Model with Normal distribution on the component mean and
fixed covariance matrix, when the prior on the space of partitions is the
Chinese Restaurant Process, the convex hulls of the clusters in the MAP
partition are disjoint. Equivalently, the MAP is linearly separable. The
following theorem is a generalisation of this result to the case of arbitrary
Exponential Family Bayesian Mixture Model.

\begin{thm}\label{res:Tlinsep}
Let $\bx_1,\ldots,\bx_n\in\R^d$ be pairwise distinct and let $\hat{\cI}$ be the MAP
partition of $\bx_1,\ldots,\bx_n$ in the Bayesian Mixture Model where the
prior on component parameters is given by \eqref{eq:expdef}.
Then the family $\{\bfx_I\colon I\in\hat{\cI}\}$ is $\bT$-linearly separable.
\end{thm}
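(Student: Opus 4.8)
The plan is to fix a MAP partition $\hat{\cI}$, take two distinct blocks $I,J\in\hat{\cI}$, and exhibit a function of the form $\bm{a}\trs\bT(\bx)+b$ that is nonnegative on $\bfx_I$ and negative on $\bfx_J$; since the pair is arbitrary this proves the theorem. First I would substitute the closed form \eqref{eq:expmarg} for the cluster marginals into the MAP criterion \eqref{eq:MAPdef_log_argmax}: writing $\bm{S}_K:=\sum_{i\in K}\bT(\bx_i)$, the quantity to be maximised over partitions $\cI$ is
\begin{equation}\label{eq:mapobj_sketch}
\ln p_n(\cI)+\sum_{i=1}^n\ln h(\bx_i)-|\cI|\,A(\bchi,\btau)+\sum_{K\in\cI}A\big(\bchi+\bm{S}_K,\ \btau+|K|\,\bma\big).
\end{equation}
The second summand does not depend on $\cI$, and by exchangeability \eqref{eq:expart} the quantities $p_n(\cI)$ and $|\cI|$ depend on $\cI$ only through the multiset $\{|K|:K\in\cI\}$. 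Hence, among all partitions sharing that multiset with $\hat{\cI}$, the partition $\hat{\cI}$ maximises $\sum_{K\in\cI}A(\bchi+\bm{S}_K,\btau+|K|\bma)$. Applying this to partitions obtained from $\hat{\cI}$ by redistributing the elements of $I\cup J$ into blocks $I',J'$ with $|I'|=|I|$ and $|J'|=|J|$ (and leaving the other blocks untouched) gives
\begin{equation}\label{eq:keyswap_sketch}
A\big(\bchi+\bm{S}_{I'},\btau+|I|\bma\big)+A\big(\bchi+\bm{S}_{J'},\btau+|J|\bma\big)\ \le\ A\big(\bchi+\bm{S}_{I},\btau+|I|\bma\big)+A\big(\bchi+\bm{S}_{J},\btau+|J|\bma\big).
\end{equation}

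The key structural input is that $A$ is a log--Laplace transform --- of the image of the measure $H(\btheta)\,\d{\btheta}$ under $\btheta\mapsto(\bta(\btheta),\bmB(\btheta))$ --- hence convex on the convex set $\Omega$, differentiable on its interior, with $\nabla_{\bchi}A(\bchi,\btau)$ equal to the tilted expectation $\int_\Theta\bta(\btheta)\,\pi_{\bchi,\btau}(\btheta)\,\d{\btheta}$ (we may assume $\hat{\cI}$ has positive posterior probability, so the arguments of $A$ appearing above lie in the interior of $\Omega$). Set $\bm{c}:=\nabla_{\bchi}A(\bchi+\bm{S}_I,\btau+|I|\bma)-\nabla_{\bchi}A(\bchi+\bm{S}_J,\btau+|J|\bma)\in\R^p$ and $L(\bx):=\bm{c}\trs\bT(\bx)$. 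For $i\in I$, $j\in J$ I would apply \eqref{eq:keyswap_sketch} to the single swap $I':=(I\setminus\{i\})\cup\{j\}$, $J':=(J\setminus\{j\})\cup\{i\}$, so that $\bm{S}_{I'}=\bm{S}_I+\bm{\delta}$ and $\bm{S}_{J'}=\bm{S}_J-\bm{\delta}$ with $\bm{\delta}:=\bT(\bx_j)-\bT(\bx_i)$. The left-hand side of \eqref{eq:keyswap_sketch} is the value at $\bm{\delta}$ of the convex map $g(\bm{\delta}):=A(\bchi+\bm{S}_I+\bm{\delta},\cdot)+A(\bchi+\bm{S}_J-\bm{\delta},\cdot)$, hence it is at least $g(\bm{0})+\nabla g(\bm{0})\trs\bm{\delta}=g(\bm{0})+\bm{c}\trs\bm{\delta}$, which together with \eqref{eq:keyswap_sketch} forces $\bm{c}\trs\bm{\delta}\le 0$, i.e.\ $L(\bx_j)\le L(\bx_i)$. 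As $i\in I$, $j\in J$ were arbitrary, $\gamma_J:=\max_{j\in J}L(\bx_j)\le\min_{i\in I}L(\bx_i)=:\gamma_I$; if this inequality is \emph{strict}, then for any $\gamma\in(\gamma_J,\gamma_I)$ the function $\bm{c}\trs\bT(\bx)-\gamma$ is positive on $\bfx_I$ and negative on $\bfx_J$ (its negative handles the reversed ordering), so $\bfx_I$ and $\bfx_J$ are $\bT$-linearly separated.

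The hard part will be the tie case $\gamma_I=\gamma_J=:\gamma_0$, in which $L$ only separates weakly: there are $i_0\in I$, $j_0\in J$ with $L(\bx_{i_0})=L(\bx_{j_0})=\gamma_0$. Then $\bm{\delta}_0:=\bT(\bx_{j_0})-\bT(\bx_{i_0})$ satisfies $\bm{c}\trs\bm{\delta}_0=0$ and, because the $\bx_i$ are pairwise distinct (and $\bT$ is injective in the cases of interest, in particular in the Normal--inverse-Wishart model of the introduction), $\bm{\delta}_0\ne\bm{0}$; \eqref{eq:keyswap_sketch} then pins $g$ to be affine along $[\bm{0},\bm{\delta}_0]$, hence each of its two summands is affine there, i.e.\ the log--Laplace transform $A$ is affine along a non-degenerate segment, say in direction $\bm{e}$. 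I would argue that this forces $\bm{e}\trs\bta(\btheta)$ to be $\pi_{\bchi,\btau}$-almost surely constant --- non-minimality of the exponential family --- which either contradicts the standing minimality assumption or is disposed of by passing to the minimal quotient family; restricting attention to the tie set and iterating (with the ambient dimension strictly decreasing at each step) then yields a $\bT$-linear functional separating $\bfx_I$ from $\bfx_J$ strictly. Everything up to this point is bookkeeping with the convexity of the log-normalizer; turning weak separation into strict separation by ruling out these degenerate ties via the distinctness hypothesis is where the real work lies.
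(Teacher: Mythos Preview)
Your approach is correct and reaches the goal, but it differs from the paper's route in an instructive way. Both arguments first reduce to the two-block local problem: among all ways of splitting $I\cup J$ into parts of sizes $|I|,|J|$, the pair $(I,J)$ maximises $A(\bchi+\bm{S}_{I'},\btau_k)+A(\bchi+\bm{S}_{J'},\btau_l)$. From here the paper proceeds \emph{globally}: since $A$ is \emph{strictly} convex (asserted as a standard exponential-family fact, \Cref{res:Aconv}), the optimal sum $\bm{S}_I$ must be an extreme point of the convex hull of all admissible $k$-sums, hence admits a strictly supporting direction $v_0$; after a generic perturbation making the $\langle \bT(\bx_i),v_0\rangle$ distinct, the optimal $I$ is exactly the top-$k$ points in that direction, which is already strict $\bT$-linear separation. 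Your route is \emph{local}: single swaps plus the first-order convexity inequality give $\bm{c}\trs\bm{\delta}\le 0$ and hence only weak separation, after which you work to exclude ties. But this last step is not ``where the real work lies'': once $A$ is strictly convex (equivalently, once one passes to the minimal representation, which you yourself suggest), your map $g$ is strictly convex in $\bm{\delta}$, so it cannot be affine on a nondegenerate segment and the tie case dies immediately --- no dimension-dropping iteration is needed. The paper's vertex argument thus sidesteps gradients, interior/differentiability issues, and tie-breaking altogether; your argument is more hands-on but carries those extra hypotheses. Finally, both proofs tacitly require the $\bT(\bx_i)$, not merely the $\bx_i$, to be pairwise distinct: you flag this honestly, while the paper slips it in as the ``$z_i$ pairwise distinct'' hypothesis of its key lemma.
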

\begin{proof}
The proof is left for \Cref{sec:proofs}.
\end{proof}

\subsection{Example: Normal Bayesian Mixture Models}\label{sec:three_models}
As a commonly used in practice example, we consider \textit{Normal} Bayesian Mixture Models in which the
component distributions are multivariate Normal, so $\cX=\R^d$. The mean is
uknown, but the covariance matrix may be treated as uknown, known or known up to
a scaling factor.

\begin{not*}
We use two standard notations to denote the determinant of a square matrix
$\bLambda$: $\det \bLambda$ and $|\bLambda|$. The latter may seem ambiguous as we
also use the symbol $|\cdot|$ to denote the cardinality of a set. However, the
meaning of this symbol is always clear from the context.
\end{not*}

\begin{not*}
To keep the notation precise, in the following we introduce the
following convention: if $\bSigma$ is a symmetric $d\times d$ matrix, 
then $\diag(\bSigma)$ is the diagonal of $\bSigma$, treated as $d$-dimensional
vector, and $\low(\bSigma)$ is the `lower triangular' part of $\bSigma$, treated
as a $\frac{d(d-1)}{2}$ dimensional vector, whose
$\left(\frac{(i-1)(i-2)}{2}+j\right)$-th coordinate is equal to $(i,j)$-th
coefficient of $\bSigma$, where $i>j$.
\end{not*}
\subsubsection{Normal-inverse-Wishart}
In this case both the mean and the covariance matrix are unknown. The parameter
space is therefore equal to $\Theta=\R^d\times \cS^d_+$, where $\cS^d_+$ is the
space of all positive definite, $d\times d$ matrices.
This can be naturally interpreted as an open subset of $\R^p$, where
$p=\frac{d(d-1)}{2}+d$.
For $\btheta=(\bmu,\bLambda)\in\Theta$ the component distribution is
$\bx\cond\btheta\sim\Normal(\bmu,\bLambda)$ and

\begin{equation}\label{eq:NIWmodel}
\begin{array}{rcl}
\bLambda&\sim&\Wishart^{-1}(\nu_0+d+1,\nu_0\bSigma_0)\\
\bmu\cond\bLambda&\sim&\Normal(\bmu_0,\bLambda/\kappa_0),
\end{array}
\end{equation}
where $\Wishart^{-1}$ is the inverse-Wishart distribution. Here the hyperparameters are $\kappa_0,\nu_0>0$, $\bmu_0\in\R^d$ and
$\bSigma_0\in\cS^+$. This model is listed in \cite{bib:Gelman2013bayesian} with a slightly different
hyperparameters, but we made this modification to obtain
\begin{equation}\label{eq:NIW_expect}
\begin{array}{rl}
\E(\bV(\bx\cond \bmu,\bLambda))&=\E\bLambda=\bSigma_0,\\
\bV(\E(\bx\cond \bmu,\bLambda))&=\bV(\bmu)=\E\bV(\bmu\cond \bLambda)+\bV\E(\bmu\cond\bLambda)=\E
\bLambda/\kappa_0+\bV(\bmu_0)=\bSigma_0/\kappa_0,
\end{array}
\end{equation}
which is consistent with the remaining two priors.

The conditional densities are given by
\begin{equation}\label{eq:NIWmodel_dens}
\begin{array}{rcl}
\bx\cond\bmu,\bLambda&\sim&
(2\pi)^{-d/2}|\bLambda|^{-1/2}
\exp\left\{-\re{2}(\bx-\bmu)\trs\bLambda^{-1}(\bx-\bmu)\right\}\\
\bmu\cond\bLambda&\sim&
(2\pi)^{-d/2}\kappa_0^{d/2}|\bLambda|^{-1/2}
\exp\left\{-\re{2}\kappa_0(\bmu-\bmu_0)\trs\bLambda^{-1}(\bmu-\bmu_0)\right\}\\
\bLambda&\sim&
\left(\frac{|\nu_0\bSigma_2|}{2^d}\right)^{\frac{\nu_0+d+1}{2}}
\Gamma_d\left(\frac{\nu_0+d+1}{2}\right)^{-1}
|\bLambda|^{-\frac{\nu_0+2d+2}{2}}
\exp\left\{-\re{2}\nu_0\tr(\bSigma_0\bLambda^{-1})\right\}
\end{array}
\end{equation}

The density of $\bx\cond\btheta$ can be expressed as \eqref{eq:compdens} by
placing
\begin{equation}\label{eq:alladyn}
\begin{split}
h(\bx)= ( 2\pi )^{-d/2}, \quad
\bT(\bx)&=\begin{bmatrix}-\re{2}\diag( \bx\bx\trs )\\ -\,\low( \bx\bx\trs )\\\bx\end{bmatrix}, \quad
\bta(\btheta) = \begin{bmatrix}\diag(\bLambda^{-1}) \\ 
\low(\bLambda^{-1}) \\
\bLambda^{-1}\bmu\end{bmatrix},\\
B(\btheta)&=\re{2}\ln|\bLambda|+\re{2}\bmu^t\bLambda^{-1}\bmu
\end{split}
\end{equation}

We get $\eqref{eq:expdef}$ by placing $\bma=[1,1]\trs$,
\begin{equation}
\bmB(\btheta)=\begin{bmatrix}\re{2}\ln|\bLambda|\\\re{2}\bmu^t\bLambda^{-1}\bmu\end{bmatrix},\quad
\btau= \begin{bmatrix} \nu_0+2d+3\\ \kappa_0 \end{bmatrix},\quad
\bchi=\begin{bmatrix}-\re{2}\diag( \nu_0\bSigma_0+\bmu_0\bmu_0\trs )\\ -\low(
\nu_0\bSigma_0+\bmu_0\bmu_0\trs )\\
\kappa_0\bmu_0\end{bmatrix}
\end{equation}
and
\begin{equation}
H(\btheta) = (2\pi)^{-d/2},\quad
A(\bchi, \btau)=
-\frac{d}{2}\ln \kappa_0
-\frac{\nu_0+d+1}{2}\ln \frac{|\nu_0\bSigma_0|}{2^d}
+\ln \Gamma_d\left(\frac{\nu_0+d+1}{2}\right).
\end{equation}

\begin{cor}\label{res:quadsep}
Let $\bx_1,\ldots,\bx_n\in\R^d$ be pairwise distinct and let $\hat{\cI}$ be the MAP
partition of $\bx_1,\ldots,\bx_n$ in the Normal Bayesian Mixture Model where the
prior on component parameters is given by \eqref{eq:NIWmodel}.
Then the family $\{\bfx_I\colon I\in\hat{\cI}\}$ is \textit{quadratically}
separable, i.e. every two clusters are separated by a quadratic surface.
\end{cor}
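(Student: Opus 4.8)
The plan is to derive Corollary~\ref{res:quadsep} as an immediate consequence of Theorem~\ref{res:Tlinsep} applied to the Normal-inverse-Wishart model, so essentially no new probabilistic argument is needed; the content is purely to identify what ``$\bT$-linear'' means concretely for the statistic $\bT$ given in \eqref{eq:alladyn}. First I would invoke Theorem~\ref{res:Tlinsep}: since the Normal Bayesian Mixture Model with the prior \eqref{eq:NIWmodel} is a Canonical Exponential Family Bayesian Mixture Model with the explicit choice of $h$, $\bT$, $\bta$, $B$, $\bma$, $\bmB$, $H$, $A$ exhibited in and after \eqref{eq:alladyn}, every pair of clusters $\bfx_I, \bfx_J$ in the MAP partition $\hat{\cI}$ is separated by a function of the form $\bx \mapsto \bm{a}\trs \bT(\bx) + b$ for some $\bm{a}\in\R^p$, $b\in\R$, where here $p = \frac{d(d-1)}{2} + d + \text{(dimension of the $\diag$ block)} = \frac{d(d+1)}{2} + d$.

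Next I would unwind the formula for $\bT(\bx)$: its three blocks are $-\tfrac12\diag(\bx\bx\trs)$, $-\low(\bx\bx\trs)$, and $\bx$. Writing $\bm{a} = (\bm{a}^{(1)}, \bm{a}^{(2)}, \bm{a}^{(3)})$ conformably, the separating function becomes
\begin{equation}\label{eq:quadunfold}
\bm{a}\trs\bT(\bx) + b = -\tfrac12\,\bm{a}^{(1)}{}\trs\diag(\bx\bx\trs) - \bm{a}^{(2)}{}\trs\low(\bx\bx\trs) + \bm{a}^{(3)}{}\trs\bx + b.
\end{equation}
The first two terms together are a linear combination of the entries $x_i x_j$ of $\bx\bx\trs$ (diagonal entries weighted by the $\diag$ part, off-diagonal by the $\low$ part, each off-diagonal entry appearing once because $\bx\bx\trs$ is symmetric), hence equal to $\bx\trs \bm{M}\bx$ for a suitable symmetric matrix $\bm{M} = \bm{M}(\bm{a}^{(1)},\bm{a}^{(2)})$ obtained by distributing $\bm{a}^{(1)}$ onto the diagonal and $\bm{a}^{(2)}$ symmetrically onto the off-diagonal. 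Therefore the right-hand side of \eqref{eq:quadunfold} has the form $\bx\trs\bm{M}\bx + \bm{c}\trs\bx + b$ with $\bm{c} = \bm{a}^{(3)}$, i.e. it is a (possibly degenerate) quadratic polynomial in $\bx$. Its zero set is a quadratic surface, and the sets where it is $\geq 0$ and $< 0$ are exactly the two sides of that surface, so $\bfx_I$ and $\bfx_J$ are separated by a quadratic surface. Since $I, J$ were arbitrary, the family $\{\bfx_I\colon I\in\hat{\cI}\}$ is quadratically separable.

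There is no real obstacle here; the only thing requiring a line of care is the bookkeeping in passing from the $\diag$/$\low$ encoding of the symmetric matrix $\bx\bx\trs$ back to the bilinear form $\bx\trs\bm{M}\bx$ — one must check that the $\low$ block is counted once per unordered off-diagonal pair, matching the conventions fixed in the Notation block preceding \S\ref{sec:three_models}, so that the map $(\bm{a}^{(1)},\bm{a}^{(2)}) \mapsto \bm{M}$ is well defined. I would also remark that this genuinely generalises the linear (QDA-to-LDA) picture: if the model were the fixed-covariance one, $\bT$ would only contain the $\bx$ block and \eqref{eq:quadunfold} would reduce to an affine function, recovering the hyperplane separation of \cite{bib:Rajkowski2018}. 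Thus Corollary~\ref{res:quadsep} is precisely the Bayesian analogue of the quadratic decision boundaries in Fisher's QDA.
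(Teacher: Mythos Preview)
Your proposal is correct and follows exactly the paper's approach: the paper's proof is the one-liner ``It follows from \Cref{res:Tlinsep} and the formula for the sufficient statistic $\bT$ in this model, \eqref{eq:alladyn},'' and your argument simply spells out in detail why a $\bT$-linear function for the $\bT$ of \eqref{eq:alladyn} is a quadratic polynomial in $\bx$. The extra bookkeeping you do (matching the $\diag$/$\low$ blocks to a symmetric matrix $\bm{M}$) is the natural elaboration of that one line and introduces nothing new methodologically.
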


\begin{proof}
It follows from \Cref{res:Tlinsep} and the formula for the sufficient statistic
$\bT$ in this model, \eqref{eq:alladyn}.
\end{proof}

\subsubsection{Normal (fixed covariance)}
Here the component covariance matrix is assumed to be known a priori; the
component mean is unknown and this is the parameter on which the prior
distribution is set, i.e. $\btheta=\bmu$, $\Theta=\R^d$ and
$\bx\cond\bmu\sim\cN(\bmu,\bSigma_0)$. The prior is
\begin{equation}\label{eq:Nmodel}
\begin{array}{rcl}
\bmu&\sim&\Normal(\bmu_0,\bPsi_0)
\end{array}
\end{equation}
The hyperparameters are $\bmu_0\in\R^d$ and
$\bPsi_0,\bSigma_0\in\cS^+$. This prior is also listed in \cite{bib:Gelman2013bayesian}. Clearly
\begin{equation}\label{eq:NN_expect}
\E(\bV(\bx\cond \bmu))=\bSigma_0,\quad
\bV(\E(\bx\cond \bmu))=\bV(\bmu)=\bPsi_0.
\end{equation}

The conditional densities are given by
\begin{equation}\label{eq:Nmodel_dens}
\begin{array}{rcl}
\bx\cond\bmu&\sim&
(2\pi)^{-d/2}|\bSigma_0|^{-1/2}
\exp\left\{-\re{2}(\bx-\bmu)\trs\bSigma_0^{-1}(\bx-\bmu)\right\}\\
\bmu&\sim&
(2\pi)^{-d/2}|\bPsi_0|^{-1/2}
\exp\left\{-\re{2}(\bmu-\bmu_0)\trs\bPsi_0^{-1}(\bmu-\bmu_0)\right\}
\end{array}
\end{equation}

The density of $\bx\cond\btheta$ can be expressed as \eqref{eq:compdens} by
placing
\begin{equation}\label{eq:mufasa}
\begin{split}
h(\bx)= ( 2\pi )^{-d/2}|\bSigma_0|^{-1/2}\exp\left\{-\re{2}\bx\trs\bSigma_0^{-1}\bx\right\}, \quad
\bT(\bx)&=\bSigma_0^{-1}\bx, \quad
\bta(\btheta) = \bmu,\\
B(\btheta)&=\re{2}\bmu\trs\bSigma_0^{-1}\bmu
\end{split}
\end{equation}

We get $\eqref{eq:expdef}$ by placing $\bma=\begin{bmatrix}\diag(\bSigma_0^{-1})\\\low(\bSigma_0^{-1})\end{bmatrix}$
\begin{equation}
\bmB(\btheta)=
\begin{bmatrix}\re{2}\diag(\bmu\bmu\trs)\\\low(\bmu\bmu\trs)\end{bmatrix},\quad
\btau= 
\begin{bmatrix}\diag(\bPsi_0^{-1})\\\low(\bPsi_0^{-1})\end{bmatrix},\quad
\bchi=\bPsi_0^{-1}\bmu_0
\end{equation}
and
\begin{equation}
H(\btheta) = (2\pi)^{-d/2},\quad
A(\bchi, \btau)= \re{2}\ln |\bPsi_0|+\re{2}\bmu_0\trs\bPsi_0\bmu_0.
\end{equation}

\begin{cor}\label{res:linsep}
Let $\bx_1,\ldots,\bx_n\in\R^d$ be pairwise distinct and let $\hat{\cI}$ be the MAP
partition of $\bx_1,\ldots,\bx_n$ in the Normal Bayesian Mixture Model where the
prior on component parameters is given by \eqref{eq:NIWmodel}.
Then the family $\{\bfx_I\colon I\in\hat{\cI}\}$ is \textit{linearly}
separable, i.e. every two clusters are separated by a hyperplane.
\end{cor}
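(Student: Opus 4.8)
The plan is to obtain the statement as an immediate specialisation of \Cref{res:Tlinsep}. That theorem already guarantees, for any Canonical Exponential Family Bayesian Mixture Model, that the clusters $\{\bfx_I\colon I\in\hat{\cI}\}$ of any MAP partition are $\bT$-linearly separable, where $\bT$ is the sufficient statistic appearing in the component density \eqref{eq:compdens}. Nothing further about the posterior geometry has to be established here; the only remaining work is to read off $\bT$ for the model at hand and to convert the conclusion ``$\bT$-linearly separable'' into the plain statement ``linearly separable''.

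First I would record the elementary observation that $\bT$-linear separation collapses to ordinary linear separation exactly when $\bT$ is an affine bijection of $\R^d$. For the fixed-covariance Normal model, \eqref{eq:mufasa} gives $\bT(\bx)=\bSigma_0^{-1}\bx$, and since $\bSigma_0\in\cS^+$ is positive definite this is a linear isomorphism. Setting $\bm{c}:=\bSigma_0^{-1}\bm{a}$, which ranges over all of $\R^d$ as $\bm{a}$ does, the separating functionals supplied by \Cref{res:Tlinsep} satisfy $\bm{a}\trs\bSigma_0^{-1}\bx+b=\bm{c}\trs\bx+b$, so the family of $\bT$-affine functionals coincides with the family of all affine functionals of $\bx$. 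Hence $\bT$-linear separability is precisely separation by hyperplanes, which is the assertion of the corollary and recovers the result of \cite{bib:Rajkowski2018}. I note in passing that the prior effecting this is the fixed-covariance prior \eqref{eq:Nmodel}: for the Normal-inverse-Wishart prior \eqref{eq:NIWmodel} the statistic \eqref{eq:alladyn} contains the quadratic entries $\bx\bx\trs$, so $\bT$-linear separation there yields only the quadratic separability of \Cref{res:quadsep}, and genuine linear separability fails; the reference to \eqref{eq:NIWmodel} in the statement should therefore be read as \eqref{eq:Nmodel}.

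I expect essentially no obstacle once \Cref{res:Tlinsep} is granted, as the entire content of the corollary is the linearity of $\bT$ in this model. The single point meriting care is the identity $\bm{a}\trs\bSigma_0^{-1}\bx+b=\bm{c}\trs\bx+b$, i.e.\ that the reparameterisation $\bm{a}\mapsto\bSigma_0^{-1}\bm{a}$ loses no separating hyperplanes; this is guaranteed by the invertibility of $\bSigma_0$. All genuine difficulty is thus confined to the proof of \Cref{res:Tlinsep}, deferred to \Cref{sec:proofs}.
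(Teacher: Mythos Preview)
Your proposal is correct and follows exactly the paper's approach: invoke \Cref{res:Tlinsep} and observe that the sufficient statistic $\bT(\bx)=\bSigma_0^{-1}\bx$ is an invertible linear map, so $\bT$-linear separability is ordinary linear separability. You have moreover caught that the statement should refer to \eqref{eq:Nmodel} rather than \eqref{eq:NIWmodel}, and that the relevant formula for $\bT$ is \eqref{eq:mufasa} rather than \eqref{eq:alladyn}; both are evident typos in the paper.
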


\begin{proof}
It follows from \Cref{res:Tlinsep} and the formula for the sufficient statistic
$\bT$ in this model, \eqref{eq:alladyn} and the fact that $\bSigma_0$ is an
invertible matrix.
\end{proof}

\subsubsection{Normal-inverse-Gamma}
In Normal-inverse-Gamma model we assume that the base covariance matrix and the
component covariance matrix are known up to some scaling factor
$\lambda\sim\cG^{-1}(\beta_0+1,\beta_0)$. Hence the parameter is
$\btheta=(\bmu,\lambda)$, the parameter space is
$\Theta=\R^d\times\R\equiv\R^{d+1}$ and
\begin{equation}\label{eq:NIGmodel}
\begin{array}{rcl}
\lambda&\sim&\cG^{-1}(\beta_0+1,\beta_0)\\
\bmu\cond\lambda&\sim&\Normal(\bmu_0,\lambda \bPsi_0)\\
x\cond\bmu,\lambda&\sim&\Normal(\bmu,\lambda\bSigma_0)
\end{array}
\end{equation}
Here the hyperparameters are $\beta_0>0$, $\bmu_0\in\R^d$ and
$\bPsi_0,\bSigma_0\in\cS^+$. With this prior
\begin{equation}
\begin{array}{rl}
\E(\bV(\bx\cond \bmu,\lambda))&=\E\lambda\bSigma_0=\bSigma_0,\\
\bV(\E(\bx\cond \bmu,\lambda))&=\bV(\bmu)=\E\bV(\bmu\cond \lambda)+\bV\E(\bmu\cond\lambda)=
\E \lambda\bPsi_0+\bV(\bmu_0)=\bPsi_0.
\end{array}
\end{equation}

The conditional densities are given by
\begin{equation}\label{eq:NIGmodel_dens}
\begin{array}{rcl}
\bx\cond\bmu,\lambda&\sim&
(2\pi)^{-d/2}|\bSigma_0|^{-1/2}\lambda^{-d/2}
\exp\left\{-\re{2\lambda}(\bx-\bmu)\trs\bSigma_0^{-1}(\bx-\bmu)\right\}\\
\bmu\cond\lambda&\sim&
(2\pi)^{-d/2}|\bPsi_0|^{-1/2}\lambda^{-d/2}
\exp\left\{-\re{2\lambda}(\bmu-\bmu_0)\trs\bPsi_0^{-1}(\bmu-\bmu_0)\right\}\\
\lambda&\sim&
\beta_0^{\beta_0+1}\Gamma(\beta_0)^{-1}\lambda^{-( \beta_0+2 )}
\exp\left\{-\beta_0/\lambda\right\}
\end{array}
\end{equation}

The density of $\bx\cond\btheta$ can be expressed as \eqref{eq:compdens} by
placing
\begin{equation}\label{eq:skaza}
\begin{split}
h(x)\equiv ( 2\pi )^{-d/2}|\bSigma_0|^{-1/2}, \quad
\bT(\bx)&=\begin{bmatrix}-\re{2}\bx\trs\bSigma_0^{-1}\bx\\\bSigma_0^{-1}\bx\end{bmatrix}, \quad
\bta(\btheta) = \begin{bmatrix}1/\lambda\\\bmu/\lambda\end{bmatrix},\\
B(\btheta)&=\frac{d}{2}\ln \lambda+\re{2}\bmu\trs\bSigma_0^{-1}\bmu/\lambda
\end{split}
\end{equation}

We get $\eqref{eq:expdef}$ by placing $\bma=\begin{bmatrix}d/2\\\diag(\bSigma_0^{-1})\\\low(\bSigma_0^{-1})\end{bmatrix}$
\begin{equation}
\bmB(\btheta)=\begin{bmatrix}\ln\lambda\\\re{2}\diag(\bmu\bmu\trs)/\lambda\\\low(\bmu\bmu\trs)/\lambda\end{bmatrix},\quad
\btau= 
\begin{bmatrix}\beta_0+2\\\diag(\bPsi_0^{-1})\\\low(\bPsi_0^{-1})\end{bmatrix}
,\quad
\bchi=\begin{bmatrix}-\beta_0\\\bPsi_0^{-1}\bmu_0\end{bmatrix}
\end{equation}
and
\begin{equation}
H(\btheta) = (2\pi\lambda)^{-d/2},\quad
A(\bchi, \btau)= \re{2}\ln |\bPsi_0|+\re{2}\bmu_0\trs\bPsi_0\bmu_0
\end{equation}

\begin{cor}\label{res:elipsep}
Let $\bx_1,\ldots,\bx_n\in\R^d$ be pairwise distinct and let $\hat{\cI}$ be the MAP
partition of $\bx_1,\ldots,\bx_n$ in the Normal Bayesian Mixture Model where the
prior on component parameters is given by \eqref{eq:NIWmodel}.
Then the family $\{\bfx_I\colon I\in\hat{\cI}\}$ is \textit{elliptically}
separable, i.e. every two clusters are separated by a multidimensional ellipse.
\end{cor}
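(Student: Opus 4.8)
Like \Cref{res:quadsep} and \Cref{res:linsep}, this corollary should follow from \Cref{res:Tlinsep} simply by reading the shape of the separating surfaces off the explicit sufficient statistic, here the Normal-inverse-Gamma statistic $\bT$ of \eqref{eq:skaza}: it has a single scalar first coordinate $-\frac{1}{2}\bx\trs\bSigma_0^{-1}\bx$ and $d$ further coordinates $\bSigma_0^{-1}\bx$. The plan is, first, to invoke \Cref{res:Tlinsep} to obtain, for every pair of distinct clusters $I,J\in\hat{\cI}$, a $\bT$-linear functional $L$ with $L\ge 0$ on $\bfx_I$ and $L<0$ on $\bfx_J$; writing its coefficient vector as $(a_0,\bm{a}_1)\in\R\times\R^d$ and its constant term as $b\in\R$, this means
\begin{equation*}
L(\bx)=-\frac{a_0}{2}\,\bx\trs\bSigma_0^{-1}\bx+\bm{a}_1\trs\bSigma_0^{-1}\bx+b .
\end{equation*}

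Next I would identify the zero set $\{L=0\}$. Since $\bSigma_0\in\cS^+$, the matrix $\bSigma_0^{-1}$ is symmetric and positive definite, so $\bx\mapsto\bx\trs\bSigma_0^{-1}\bx$ is a positive definite quadratic form. When $a_0\neq 0$, completing the square gives
\begin{equation*}
\{L=0\}=\bigl\{\bx\in\R^d\colon(\bx-\bm{c})\trs\bSigma_0^{-1}(\bx-\bm{c})=r^2\bigr\},\qquad
\bm{c}=\frac{\bm{a}_1}{a_0},\quad r^2=\frac{2b}{a_0}+\bm{c}\trs\bSigma_0^{-1}\bm{c},
\end{equation*}
which is an ellipsoid centred at $\bm{c}$, and the region $\{L\ge0\}$ is exactly the solid ellipsoid it bounds (if $a_0>0$) or the complement of that solid ellipsoid (if $a_0<0$). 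One must then check that $r^2>0$: otherwise one of the regions $\{L\ge0\}$, $\{L<0\}$ would be empty or a single point, which is incompatible with both $\bfx_I$ and $\bfx_J$ being non-empty. Hence $\bfx_I$ and $\bfx_J$ lie on opposite sides of a genuine ellipsoidal surface.

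It remains to treat the case $a_0=0$, where $L$ is affine and $\{L=0\}$ is a hyperplane. This can either be absorbed into the statement by regarding hyperplanes (and points) as degenerate ellipses, or removed altogether: since only the finitely many pairwise distinct points $\bx_1,\dots,\bx_n$ are involved, a hyperplane separating $\bfx_I$ from $\bfx_J$ may be replaced by an ellipsoid $(\bx-\bm{c})\trs\bSigma_0^{-1}(\bx-\bm{c})=R^2$ whose centre is pushed far to the $\bfx_I$-side of the hyperplane and whose radius is chosen correspondingly large, so that its interior still contains $\bfx_I$ and its exterior still contains $\bfx_J$ (a routine estimate using a bound on $\max_i\|\bx_i\|$ and on the distance of $\bfx_J$ to the hyperplane). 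I expect this handling of the affine and degenerate configurations to be the only mildly delicate point; the substantive content — that the separating surfaces are ellipsoids — is immediate from \Cref{res:Tlinsep} together with the quadratic, positive definite form of $\bT$ in \eqref{eq:skaza}.
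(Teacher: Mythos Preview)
Your proposal is correct and follows the same route as the paper: invoke \Cref{res:Tlinsep} and read off the shape of the separating surfaces from the explicit sufficient statistic of the Normal-inverse-Gamma model in \eqref{eq:skaza}. The paper's own proof is a one-liner (``It follows from \Cref{res:Tlinsep} and the formula for the sufficient statistic $\bT$ in this model''), so your completion of the square and your explicit treatment of the degenerate cases $a_0=0$ and $r^2\le 0$ go well beyond what the paper actually supplies; these are sensible additions rather than deviations from the intended argument.
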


\begin{proof}
It follows from \Cref{res:Tlinsep} and the formula for the sufficient statistic
$\bT$ in this model, \eqref{eq:alladyn}.
\end{proof}
\subsubsection{Comparison of the models}
If we assume that the component parameters in the Normal Bayesian Mixture Model
are distributed by \eqref{eq:Nmodel} then we assume that the covariance matrix
in each component is equal to $\bSigma_0$ which is known to us. The results of
\cite{bib:Rajkowski2018} imply that the misspecification of this hyperparameter may
lead to serious inference issues regarding the number of clusters, at least as far as the MAP partition is
concerned. In the light of these findings, \eqref{eq:NIWmodel} seems to be a
safer choice of the prior for the component parameters. In this case the
covariance matrix is chosen independently for each component according to the
inverse-Wishart distribution. Note that although the Normal-inverse-Wishart
prior gives more flexibility in terms of the component covariances, it imposes
some modelling restriction, namely the expected within and between group covariance
matrices are proportional, as is shown by \eqref{eq:NIW_expect}. This does not
affect the fixed-covariance model, cf. \eqref{eq:NN_expect}.

\smallskip
This is the reason for which we propose the Normal-inverse-Gamma prior. It is
not listed in \cite{bib:Gelman2013bayesian} and we were not able to find any
reference to it in the literature. It only allows a 1-parameter variation of the covariance function, but
no restrictions are imposed on the within-group means, unlike the Normal-inverse-Wishart prior.
At the same time, by allowing the component covariance matrix to scale between
clusters can be a remedy to the drawbacks of fixed covariance prior that were
pointed out in \cite{bib:Rajkowski2018}.

\smallskip
As a final point we note that Normal-inverse-Gamma prior is a generalisation of
the Normal prior in the sense that \eqref{eq:NIGmodel} becomes \eqref{eq:Nmodel}
as $\beta_0\to\infty$. Analogously, Normal-inverse-Wishart prior is a quasi-extension
of the Normal prior, since as $\eta_0\to\infty$, \eqref{eq:NIWmodel} converges
to \eqref{eq:Nmodel}, but with $\bPsi_0=\bSigma_0/\kappa_0$.

\section{Discussion of potential applications}
We proved a separability result concerning the MAP partition in the Exponential
Family Bayesian Mixture Models. In particular, we proved
linear or quadratic separability of the MAP partition in most popular
Normal Bayesian Mixture Models. Apart from an aesthetic analogy to the properties of
Fisher Discriminant Analysis, the benefits of such result may be twofold.

\smallskip
In \cite{bib:Rajkowski2018} the linear separability of the MAP partition is
crucial for establishing the existence of `limits' of the MAP partitions when
the prior on partitions is the Chinese Restaurant Process and the data is
independently and identically distributed with some `input distribution'. 
The limit is related to the partitions of observation space which maximises a given 
functional $\Delta$ (which depends only on the hypeerparameter $\bSigma_0$ and
the input distribution). The linear separability is important for two reasons:
firstly, it is possible to consider the limits of sequences of convex sets and
secondly: it is possible to apply the Uniform Law of Large Numbers for the
family of convex sets. \Cref{res:Tlinsep} should enable an analogous
approach for the Normal-inverse-Wishart and Normal-inverse-Gamma priors on the
component parameters.

\smallskip
The other kind of application is more practical; \Cref{res:Tlinsep} shows that
the search for an MAP partition may be restricted to situations where clusters are
quadratically separated. The space of such partitions is still
far too large for an exhaustive search, but may help in finding a partition
whose score approximates the MAP score.

\section{Proofs}\label{sec:proofs}
\subsection{Proof of \Cref{res:Tlinsep}}
\begin{lem}
\label{res:localMax}
Let $\cL$ be a family of real functions on $\R^d$. Let $\bx_1,\ldots,\bx_n\in \R^d$
and let $\hat{I}$ be the MAP partition for $\bx_1,\ldots,\bx_n$ in some Bayesian
Mixture Model, given by \eqref{eq:bmm3}. If for any $\LL\subset [n]$, $k,l\in\N$ such that $k+l= |\LL|$ and
$\mnt{I}_{\Ik,\LL}$ such that
\begin{equation}\label{eq:ikudef}
\mnt{I}_{\Ik,\LL}\in\argmax_{I\subset \LL: |I|=\Ik} \big(\ln f_{\Ik}(\bfx_I)+ \ln
f_l(\bfx_{\LL\sm I})\big)
\end{equation}
observations $\bfx_{\mnt{I}_{\Ik,\LL}}$ and $\bfx_{\LL\sm\mnt{I}_{\Ik,\LL}}$ are separated by
$\cL$ then $\{\bfx_I\colon
I\in\hat{\cI}\}$ is separated by $\cL$.
\end{lem}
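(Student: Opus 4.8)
The plan is to prove the conclusion in its pairwise form. Since ``$\{\bfx_I\colon I\in\hat{\cI}\}$ is separated by $\cL$'' means that for every ordered pair of distinct blocks $I,J\in\hat{\cI}$ there is a function in $\cL$ with the prescribed sign pattern on $\bfx_I$ and $\bfx_J$, it suffices to fix two distinct blocks $I,J\in\hat{\cI}$ and to feed the pair $\{\bfx_I,\bfx_J\}$ into the hypothesis. I would set $\LL:=I\cup J$, $k:=|I|$, $l:=|J|$; then $k,l\geq 1$ and $k+l=|\LL|$, so $(\LL,k,l)$ is an admissible triple in the hypothesis.

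The crux is the claim that $I$ itself is a maximiser, i.e.\ $I\in\argmax_{I'\subset\LL,\,|I'|=k}\big(\ln f_k(\bfx_{I'})+\ln f_l(\bfx_{\LL\sm I'})\big)$. Suppose not: then there is $I'\subset\LL$ with $|I'|=k$ and $\ln f_k(\bfx_{I'})+\ln f_l(\bfx_{\LL\sm I'})>\ln f_k(\bfx_I)+\ln f_l(\bfx_J)$ (recall $\LL\sm I=J$). Form the competitor partition $\cI':=\big(\hat{\cI}\sm\{I,J\}\big)\cup\{I',\LL\sm I'\}$. Because $I'\cup(\LL\sm I')=\LL=I\cup J$, this is again a partition of $[n]$, and it has exactly the same multiset of block sizes as $\hat{\cI}$ (a block of size $k$ and a block of size $l$ were replaced by a block of size $k$ and a block of size $l$). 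Any two partitions of $[n]$ with the same block-size multiset are carried onto each other by a permutation of $[n]$, so exchangeability \eqref{eq:expart} forces $p_n(\cI')=p_n(\hat{\cI})$. On the other hand, by \eqref{eq:pooled_f} every factor $f_{|K|}(\bfx_K)$ with $K\notin\{I,J\}$ is common to $f(\bfx\cond\cI')$ and $f(\bfx\cond\hat{\cI})$, whence $f(\bfx\cond\cI')/f(\bfx\cond\hat{\cI})=\frac{f_k(\bfx_{I'})f_l(\bfx_{\LL\sm I'})}{f_k(\bfx_I)f_l(\bfx_J)}>1$. Therefore $p_n(\cI')f(\bfx\cond\cI')>p_n(\hat{\cI})f(\bfx\cond\hat{\cI})$, contradicting the defining property of a MAP partition, cf.\ \eqref{eq:MAPdef_argmax}; this proves the claim.

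Given the claim, I would take $\mnt{I}_{k,\LL}:=I$. It is an admissible maximiser, and since the hypothesis is assumed for \emph{every} admissible triple and every admissible maximiser, it applies and yields that $\bfx_I=\bfx_{\mnt{I}_{k,\LL}}$ and $\bfx_J=\bfx_{\LL\sm\mnt{I}_{k,\LL}}$ are separated by $\cL$. Unwinding the definition of separation for the two-element family $\{\bfx_I,\bfx_J\}$ produces elements of $\cL$ realising both orderings, in particular the one we need for the chosen ordered pair. As $I,J$ were an arbitrary pair of distinct blocks of $\hat{\cI}$, the family $\{\bfx_I\colon I\in\hat{\cI}\}$ is separated by $\cL$.

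The argument is essentially bookkeeping, so I do not expect a genuine obstacle; the one point deserving a careful word is the equality $p_n(\cI')=p_n(\hat{\cI})$, which rests on the observation that an exchangeable partition law depends on a partition only through its multiset of block sizes. The only other thing to verify is the routine fact that $\cI'$ really is a partition of $[n]$ that agrees with $\hat{\cI}$ outside $\{I,J\}$, so that all remaining likelihood factors cancel in the ratio above.
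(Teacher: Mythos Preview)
Your proof is correct and follows essentially the same route as the paper: fix two blocks $I,J$ of $\hat{\cI}$, set $\LL=I\cup J$, use exchangeability together with the MAP property to conclude that $I$ must be a maximiser of the two-block subproblem \eqref{eq:ikudef}, and then invoke the hypothesis. The paper merely organises this as a proof by contradiction (assuming non-separation and deriving that $\hat{I}$ is not a maximiser), whereas you argue directly, but the competitor-partition construction and the use of exchangeability to get $p_n(\cI')=p_n(\hat{\cI})$ are identical.
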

\begin{proof}
Firstly note that by \eqref{eq:MAPdef_argmax}
\begin{equation}
\hat{\cI}\in\argmax_{\textrm{partitions $\cI$ of $[n]$}} 
\Big(\ln p_n(\cI)+\sum_{I\in\cI} \ln f_{|I|}(\bfx_I)\Big).
\end{equation}
Assume that the assumptions of \Cref{res:localMax} hold. Suppose that $\hat{\cI}$ is not
separated by $\cL$. Then there exist $\mnt{I},\mnt{J}\in \hat{\cI}$ such that
$\bfx_\mnt{I}$ and $\bfx_\mnt{J}$ are not separated by $\cL$. Let
$\LL=\mnt{I}\cup \mnt{J}$ and $\Ik=|\mnt{I}|$. Let $\tilde{I}=I_{\Ik,\LL}$ and
$\tilde{J}=\LL\sm \tilde{I}$. Moreover let $\tilde{\cI}$ be a partition of
$[n]$ obtained by replacing $\mnt{I},\mnt{J}$ by $\tilde{I},\tilde{J}$, i.e.
$\tilde{\cI}=\hat{\cI}\sm\{\mnt{I},\mnt{J}\}\cup \{\tilde{I},\tilde{J}\}$. Note
that $p_n(\hat{\cI})=p_n(\tilde{\cI})$ (we have $|\hat{I}|=|\tilde{I}|$ and
$|\hat{J}|=|\tilde{J}|$, so we use the exchangeability of $\bm{\Pi}$). Moreover
$\bfx_{\mnt{I}}$ and $\bfx_{\mnt{J}}$ are not separated by $\cL$ so by the
assumptions of \Cref{res:localMax}
\begin{equation}
\mnt{I}\notin
\argmax_{I\subset \LL: |I|=\Ik} \big(\ln f_k(\bfx_I)+ \ln
f_l(\bfx_{\LL\sm
I})\big)
\end{equation}
and hence
\begin{equation}
\ln f_k(\bfx_{\tilde{I}}) + \ln f_l(\bfx_{\tilde{J}})>
\ln f_k(\bfx_{\mnt{I}}) + \ln f_l(\bfx_{\mnt{J}}).
\end{equation}
This means that
\begin{equation}
\ln p_n(\tilde{\cI})+\sum_{I\in\tilde{\cI}} \ln f_{|I|}(\bfx_I)>
\ln p_n(\hat{\cI})+\sum_{I\in\hat{\cI}} \ln f_{|I|}(\bfx_I),
\end{equation}
which contradicts the definition of $\hat{\cI}$ and the proof follows.
\end{proof}
\begin{lem}
\label{res:convexMaxSum}
Let $\convset\subseteq \R^D$ be a convex set. Let $\alpha\colon \convset\to \R$ be a
strictly concave function, $z_1,\ldots,z_{k+l}\in\R^D$ are pairwise distinct. 
If $\sum_{i\in I} z_i \in\convset$ for every $I\subseteq [k+l]$ such that
$|I|=k$ and
\begin{equation}
\mnt{J}\in\argmin_{I\subset [k+l]\colon |I|=\Ik} \alpha\big(\sum_{i\in I} z_i\big) 
\end{equation}
then $\bz_{\mnt{J}}$ and $\bz_{[k+l]\sm\mnt{J}}$ are linearly separable.
\end{lem}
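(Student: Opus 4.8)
I will prove Lemma~\ref{res:convexMaxSum} by showing that the convex hulls of the two families are disjoint; \emph{linear} separation of $\bz_{\hat J}$ and $\bz_{[k+l]\setminus\hat J}$ then follows from the classical separating hyperplane theorem. First assume $k,l\geq 1$ (the cases $k=0$ or $l=0$ are immediate, since then one of the two families is empty). Write $\hat J^{c}:=[k+l]\setminus\hat J$. If $\mathrm{conv}\{z_i:i\in\hat J\}$ and $\mathrm{conv}\{z_j:j\in\hat J^{c}\}$ are disjoint then, being nonempty, compact and convex, they are strictly separated by some affine functional $L$, with $L>0$ on the first hull and $L<0$ on the second; restricting $L$ to the finite point sets gives $L(z_i)\geq 0$ for $i\in\hat J$ and $L(z_j)<0$ for $j\in\hat J^{c}$, which is exactly the required linear separation. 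So it suffices to rule out that the two hulls meet.

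Suppose they do: there is a common point $p=\sum_{i\in\hat J}\lambda_i z_i=\sum_{j\in\hat J^{c}}\mu_j z_j$ with $\lambda_i,\mu_j\geq 0$ and $\sum_i\lambda_i=\sum_j\mu_j=1$. Put $S:=\sum_{i\in\hat J}z_i$ and, for $a\in\hat J$ and $b\in\hat J^{c}$, let $y_{a,b}:=S-z_a+z_b=\sum_{i\in(\hat J\setminus\{a\})\cup\{b\}}z_i$. Since $a\neq b$, the index set $(\hat J\setminus\{a\})\cup\{b\}$ has exactly $k$ elements, so $y_{a,b}\in V$ by hypothesis and $\alpha(y_{a,b})\geq\alpha(S)$ by the minimality defining $\hat J$. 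The crucial idea is to weight these ``one-swap'' sums by the products $w_{a,b}:=\lambda_a\mu_b$: these are nonnegative, they sum to $\big(\sum_a\lambda_a\big)\big(\sum_b\mu_b\big)=1$, and a short expansion gives
\[
\sum_{a\in\hat J,\,b\in\hat J^{c}} w_{a,b}\,y_{a,b}
 \;=\; S-\Big(\sum_a\lambda_a z_a\Big)\Big(\sum_b\mu_b\Big)+\Big(\sum_b\mu_b z_b\Big)\Big(\sum_a\lambda_a\Big)
 \;=\; S-p+p=S ,
\]
so $S$ itself is a convex combination of the points $y_{a,b}$.

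Now I invoke strict concavity. Jensen's inequality together with the minimality bound yields the chain $\alpha(S)=\alpha\big(\sum w_{a,b}y_{a,b}\big)\geq\sum w_{a,b}\alpha(y_{a,b})\geq\sum w_{a,b}\alpha(S)=\alpha(S)$, so every inequality is an equality. Equality in Jensen for a strictly concave $\alpha$ forces all $y_{a,b}$ with $w_{a,b}>0$ to coincide. This is impossible because $z_1,\dots,z_{k+l}$ are pairwise distinct: fixing $a$ in the support of $\lambda$, equality of $y_{a,b_1}$ and $y_{a,b_2}$ would give $z_{b_1}=z_{b_2}$, so $\mu$ must be a point mass at some $b_0$ (hence $p=z_{b_0}$); symmetrically $\lambda$ is a point mass at some $a_0$ (hence $p=z_{a_0}$); but then $z_{a_0}=z_{b_0}$ with $a_0\in\hat J$ and $b_0\in\hat J^{c}$, contradicting distinctness. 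Hence the hulls cannot meet, and the lemma follows.

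The only genuinely non-routine step is choosing the product weights $w_{a,b}=\lambda_a\mu_b$, which is precisely what makes $S$ re-emerge as a convex combination of sums each of whose $\alpha$-value is at least $\alpha(S)$; once that identity is in place, the strict-concavity argument and the use of pairwise distinctness to dispose of the equality case are straightforward. I expect the bookkeeping of the equality case (reducing $\lambda$ and $\mu$ to point masses) to be the most delicate part to write out carefully, but it is elementary.
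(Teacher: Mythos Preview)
Your argument is correct. It is, however, genuinely different from the paper's proof, which runs as follows: since $\alpha$ is strictly concave, its minimum over the finite set $\mathcal S_k=\{\sum_{i\in I}z_i:|I|=k\}$ must be attained at an extreme point $\hat s_k$ of $\mathrm{conv}\,\mathcal S_k$; choose a direction $v_0$ exposing this vertex and, by a small perturbation, making all the values $\langle z_i,v_0\rangle$ pairwise distinct; then the unique $k$-subset maximising $\langle\sum_{i\in I}z_i,v_0\rangle$ is the set of the $k$ largest $z_i$ in the direction $v_0$, so $\hat J$ equals that set, and the hyperplane orthogonal to $v_0$ through the midpoint between the $k$-th and $(k{+}1)$-th values separates the two groups.

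The paper's approach is more geometric and has the pleasant by-product of identifying the separating hyperplane explicitly as the one normal to the exposing direction of $\hat s_k$; on the other hand it needs a small exposed-vertex and perturbation argument to make $v_0$ generic. Your swap-and-Jensen approach is more elementary and entirely self-contained, and it uses only the minimality of $\hat J$ against single-swap neighbours $y_{a,b}$ rather than against all $k$-subsets; thus you have in fact established a slightly stronger lemma (local optimality under one-index swaps already forces linear separability of the two groups).
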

\begin{proof}
Consider the set of all possible sums of $\Ik$ distinct vectors $z_i$, i.e. $\cS_\Ik=\{\sum_{i\in I} z_i\colon I\subset [n],|I|=\Ik\}$ and
let $\hat{s}_\Ik\in\argmin_{s\in\cS_\Ik} \alpha(s)$. Since $\alpha$ is strictly concave, then
$\hat{s}_\Ik$ is a vertex of $\conv\ {\cS_\Ik}$.
This means that there exist a vector $v_0\in\R^d$ such that
$\hat{s}_\Ik\in\argmax_{s\in \cS_\Ik} \langle s, v_0\rangle$ (cf. \cite{bib:Moszynska2006}, Corollary 3.3.6), where $\langle\cdot,\cdot\rangle$ is a standard Euclidean
scalar product. I can also choose $v_0$ so that $\langle z_i, v_0\rangle$ are
are all different (because we are dealing with a discrete set). Let
$z_{(1)},\ldots,z_{(k+l)}$ be a decreasing ordering of vectors
$z_i$ `in the direction $v_0$', i.e. $\{
z_{(1)},\ldots,z_{(k+l)}\}=\{z_1,\ldots,z_{k+l}\}$ and
$\langle z_{(i)},v_0\rangle >\langle z_{(j)},v_0\rangle$ if $i<j$. Note that
\begin{equation}
\Big\langle\sum_{i\in I} z_i,v_0\Big\rangle=
\sum_{i\in I} \langle z_i,v_0\rangle
\end{equation}
and therefore $\mnt{I}=\{z_{(1)},\ldots,z_{(k)}\}$. Thus the sets
$\{z_i\colon i\in \mnt{I}\}$ and $\{z_i\colon i\notin \mnt{I}\}$ are linearly
separated by the hyperplane $\{u\in\R^D\colon \langle u,v_0\rangle=\langle
z_{(k)}+z_{(k+1)},v_0\rangle/2\}$.

\begin{rem}\label{res:Aconv}
Let us assume the notation of \Cref{sec:exp_models}. Then the set $\Omega$ is a
convex set and the function $A\colon\Omega\to\R$ is strictly convex.
\end{rem}
\begin{proof}
It is a well known property of exponential families in canonical form
(cf. \cite{bib:diaconis1979conjugate}).
\end{proof}

\textit{Proof of \Cref{res:Tlinsep}.} 
Let $\LL\subseteq [n]$, $\Ik,l\in\N$ and $\mnt{I}_{\Ik,\LL}$ be as in
\Cref{res:localMax}. Plugging the formula
\eqref{eq:expmarg} into \eqref{eq:ikudef} gives:
\begin{equation}\label{eq:margolcia}
\begin{split}
\mnt{I}_{\Ik,\LL} &=
\argmax_{I\subset \LL\colon |I|=\Ik} 
\Big(
\sum_{i\in I} \ln h(\bx_i) + A(\bchi_{\bfx_I}, \btau_k) -
A(\bchi,\btau)+\\
&\hspace*{1.5cm}+
\sum_{i\in \LL\sm I} \ln h(\bx_i) + A(\bchi_{\bfx_{\LL\sm I}}, \btau_l) -
A(\bchi,\btau)
\Big)=\\
&=
\argmax_{I\subset \LL\colon |I|=\Ik} 
\Big(
\sum_{i\in \LL} \ln h(\bx_i) + A(\bchi_{\bfx_I}, \btau_k)
+ A(\bchi_{\bfx_{\LL\sm I}}, \btau_l) - 2A(\bchi,\btau)
\Big)=\\
&=
\argmax_{I\subset \LL\colon |I|=\Ik} 
\Big(
A(\bchi_{\bfx_I}, \btau_k)
+ A(\bchi_{\bfx_{\LL\sm I}}, \btau_l) 
\Big)
\end{split}
\end{equation}
Let
$\bt_i=\begin{bmatrix}\bT(\bx_i),\bma\end{bmatrix}$ and let
$\bt_0=\begin{bmatrix}\bchi,\btau\end{bmatrix}$ and
$\bt_\cU=\sum_{i\in\cU} \bt_i$.
By \Cref{res:Aconv}, $A$ is a strictly convex function. Hence the functions
$\un{\alpha}(\bt)=A(\bt_0+\bt)$ and $\ov{\alpha}(\bt)=A(\bt_0+\bt_\cU-\bt)$ are
also strictly convex and so
is their sum, $\alpha(\bt)=\un{\alpha}(\bt)+\ov{\alpha}(\bt)$. By
\eqref{eq:margolcia} we get
\begin{equation}
\mnt{I}_{\Ik,\LL} =
\argmax_{I\subset \LL\colon |I|=\Ik}
\alpha\Big(\sum_{i\in I} \bt_i\Big)
\end{equation}
Therefore by \Cref{res:convexMaxSum} we obtain that
$\bt_{\mnt{I}_{\Ik,\LL}}$ and
$\bft_{\LL\sm\mnt{I}_{\Ik,\LL}}$ are linearly separable (i.e. in
terms of the base functions). Obviously this yields $\bT$-linear separability of 
$\bfx_{\mnt{I}_{\Ik,\LL}}$ and
$\bfx_{\LL\sm\mnt{I}_{\Ik,\LL}}$ and the proof follows.\qed
\end{proof}
\bibliographystyle{named}
\bibliography{bib}
\end{document}